\DeclareFontFamily{OMX}{mlmex}{}
\DeclareFontShape{OMX}{mlmex}{m}{n}{%
   <->mlmex10%
   }{}%
\theoremstyle{plain}
\newtheorem{theorem}{Theorem}
\newtheorem{proposition}[theorem]{Proposition}
\newtheorem{lemma}[theorem]{Lemma}
\newtheorem{remark}{Remark}
\theoremstyle{definition}
\DeclareMathOperator{\ord}{ord}
\newcommand\plusbas{\vphantom{X^X}}
\newcommand\cA{\mathcal{A}}
\newcommand\sD[1][]{%
  \if\relax\detokenize{#1}\relax\else{}_{\plusbas#1}\mskip-1mu\relax\fi
  \mathsf{D}%
}
\newcommand\bsD[1][]{%
  \if\relax\detokenize{#1}\relax\else{}_{\plusbas#1}\mskip-1mu\relax\fi
  \boldsymbol{\mathsf{D}}%
}
\newcommand\dx{\mathrm{d}\mskip-1mu x}
\title[Ellipsephic harmonic series]{%
  Measures associated with certain ellipsephic harmonic series
  and the Allouche-Hu-Morin limit theorem}
\author[J.-F. Burnol]{Jean-François Burnol}
\address{Université de Lille,
  Faculté des Sciences et technologies,
  Département de mathématiques,
  Cité Scientifique,
  F-59655 Villeneuve d'Ascq cedex,
  France}
\email{jean-francois.burnol@univ-lille.fr}
\date{v2 January 1st, 2025. The ms has since been accepted for publication in
  Acta Mathematica Hungarica. Differs from v1 May 6, 2024 via a new title,
  some minor notational changes and a few bibliographical updates and other
  improvements.}
\subjclass[2020]{Primary 11Y60, 05A15; Secondary 11A63, 28A25;}
\keywords{Block-counting, generating functions, ellipsephic harmonic series, Irwin series}
\begin{document}

\begin{abstract}
  We consider the harmonic series $S(k)=\sum^{(k)} m^{-1}$ over the integers
  having $k$ occurrences of a given block of $b$-ary digits, of length $p$,
  and relate them to certain measures on the interval $[0,1)$.  We show that
  these measures converge weakly to $b^p$ times the Lebesgue measure, a fact
  which allows a new proof of the theorem of Allouche, Hu, and Morin which
  says $\lim S(k)=b^p\log(b)$.  A quantitative error estimate will be given.
  Combinatorial aspects involve generating series which fall under the scope
  of the Goulden-Jackson cluster generating function formalism and the work of
  Guibas-Odlyzko on string overlaps.
\end{abstract}

\maketitle

\onehalfspacing

\section{Introduction}

Throughout this work, an integer $b>1$ and a block of $b$-ary digits
$w=d_1\dots d_p$ having length $|w|=p\geq1$ are fixed.

Let $S_w(k)= \sum^{(k)}m^{-1}$ where the denominators are the positive
integers whose (minimal) representations as strings of $b$-ary digits contain
exactly $k$ occurrences of the block $w$.  Their finiteness will be reproven
in the text body.  For single-digit blocks $w$, i.e.\@ $p=1$, Farhi
\cite{farhi} proved $\lim S_w(k)=b \log(b)$.  Allouche, Hu, and Morin
\cite{allouchehumorin2024} (and \cite{allouchemorin2023} earlier for the
$b=2$ case) have now extended this to all $p\geq1$:
\begin{equation}\label{eq:ahm}
  \lim_{k\to\infty} S_w(k) = b^p\log(b)\;.
\end{equation}
The approach of \cite{allouchemorin2023,allouchehumorin2024} exploits special
properties of a certain rational function of a combinatorial nature, which had
been defined and studied for binary base already in \cite{alloucheshallit1989,
  allouchehajnalshallit1989}.  One of the reasons why the validity of
\eqref{eq:ahm} for $p>1$ appears to be significantly more difficult to
establish is the possibility of self-overlaps (strings shorter
than $ww$ may already contain three occurrences or more of the word $w$).

Using completely different tools, we confirm \eqref{eq:ahm} in the following
quantitative form:
\begin{theorem}\label{thm:limit}
  For each $k\geq1$, $\Bigl| S_w(k) - b^{|w|} \log(b)\Bigr|\leq (b-1)b^{|w|-\max(k,2)+1}$.
\end{theorem}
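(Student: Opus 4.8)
The plan is to realize each $S_w(k)$ as the integral of a \emph{fixed} bounded function against a finite measure on $[1/b,1)$, and to reduce the theorem to a discrepancy estimate for these measures. Write $E_n(k)$ for the set of $n$-digit integers (leading digit nonzero) whose $b$-ary string has exactly $k$ occurrences of $w$, and attach to each $m\in E_n(k)$ the point $y_m = m/b^{n}\in[1/b,1)$ carrying mass $b^{-n}$. Aggregating over all lengths produces $\mu_k = \sum_{n\geq1} b^{-n}\sum_{m\in E_n(k)}\delta_{y_m}$ on $[1/b,1)$, whose finiteness is equivalent to the (assumed) finiteness of $S_w(k)$. The point of the normalization is the exact identity $S_w(k) = \int_{1/b}^{1} y^{-1}\,d\mu_k(y)$, since $b^{-n}/y_m = 1/m$. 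Because $b^{|w|}\log b = \int_{1/b}^1 b^{|w|}\,y^{-1}\,dy$, the theorem becomes the claim that $\int_{1/b}^1 y^{-1}\,d\nu_k = O(b^{\,|w|-k})$ for the signed measure $\nu_k := \mu_k - b^{|w|}\,\mathrm{Leb}|_{[1/b,1)}$; that is, $\mu_k$ converges weakly to $b^{|w|}$ times Lebesgue measure, with a geometric rate.

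Next I would separate the analytic part from the combinatorial one. Since $y\mapsto1/y$ is $C^1$ on $[1/b,1]$, an integration by parts bounds $\bigl|\int_{1/b}^1 y^{-1}\,d\nu_k\bigr|$ by $\sup_{y}|F_k(y)|$ times the total variation of $1/y$ on $[1/b,1]$, which equals exactly $b-1$ (plus a boundary term of the same order), where $F_k(y)=\nu_k([1/b,y])$ is the discrepancy distribution function. It therefore suffices to control $F_k$ at the $b$-adic points $y=a/b^{r}$, i.e.\ to control the $\mu_k$-mass of the cylinder obtained by prescribing the leading digits $d_1\dots d_r$ (with $d_1\neq0$) and to compare it with the target $b^{\,|w|-r}$. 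Concretely $\mu_k(\text{cylinder}) = \sum_{n\geq r} b^{-n}\,\#\{m\in E_n(k): m \text{ begins } d_1\dots d_r\}$, and the equidistribution claim is that this tends to $b^{\,|w|-r}$ as $k\to\infty$, uniformly in the prefix, with error $O(b^{-k})$.

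The combinatorial heart is the evaluation of these prefix-constrained counts through the bivariate occurrence-counting generating function $F(x,t)=\sum_{n,k}\#\{\text{length-}n\text{ strings with }k\text{ occurrences}\}\,t^{k}x^{n}$. I would obtain $F$ from the Goulden--Jackson cluster formalism, in which the self-overlaps of $w$ enter through its Guibas--Odlyzko correlation polynomial; this is precisely the feature absent when $|w|=1$, and it is, I expect, the main obstacle, since clusters of arbitrarily many mutually overlapping copies of $w$ must be summed correctly. Specializing at $x=1/b$ (the radius forced by the weights $b^{-n}$, where $1-bx$ vanishes) yields a rational function of $t$ whose only relevant singularity on a suitable disc is a \emph{simple} pole at $t=1$; its residue produces the exact limit $b^{\,|w|-r}$ for each cylinder, hence the total $b^{|w|}\log b$, while the distance to the next singularity — which I anticipate to sit at $|t|=b$ — supplies the geometric factor $b^{-k}$. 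The passage from free strings to genuine integers (forbidding a leading zero) is a bounded correction absorbed at this stage.

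Finally I would assemble the estimates: summing the cylinder discrepancies against the variation $b-1$ of $1/y$ gives $|S_w(k)-b^{|w|}\log b|\le (b-1)b^{\,|w|-k+1}$ for $k\geq2$, the prefactor $(b-1)$ reflecting that total variation and the shift by $|w|+1$ emerging from the residue normalization together with the geometric series in $b^{-1}$. The case $k=1$ (the first appearance of $w$) is the least equidistributed one and would be handled by a direct estimate showing its discrepancy does not exceed the $k=2$ bound, which is exactly what the $\max(k,2)$ in the statement records.
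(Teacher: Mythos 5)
Your overall architecture is the paper's: the same measures $\mu_k$, the same identity $S_w(k)=\int_{[b^{-1},1)}x^{-1}\mu_k(dx)$, and the reduction to the $\mu_k$-masses of $b$-adic cylinders. But there is a genuine gap at what you yourself call the combinatorial heart. You assert, on the basis of an \emph{anticipated} singularity structure of the Goulden--Jackson cluster generating function specialized at $x=1/b$ (simple pole at $t=1$, ``next singularity at $|t|=b$''), that each cylinder mass equals $b^{p-r}+O(b^{-k})$ uniformly in the prefix. None of this is proven in your sketch, and the anticipated picture is in fact not what happens: the true statement (the paper's Theorem \ref{thm:totalmass}, proved via the left-most-occurrence factorization of Lemma \ref{lem:a} and the mass evaluations $M_w(v,0)=b$, $M_w(v,0,u)=b^{2-p}$) is that the cylinder mass is \emph{exactly} $b^{p}b^{-r}$ as soon as $k\geq 1+k_w^*(s)$, in particular for all $k>r$; for each fixed cylinder the correction to the $t/(1-t)$ pole is a polynomial in the occurrence-marking variable, not a geometric tail, so there is no secondary singularity supplying a $b^{-k}$ rate. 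Handling the self-overlap combinatorics that makes this exactness true (via the autocorrelation polynomial, or from first principles as the paper does) is precisely the content you have deferred to ``I expect'' and ``I anticipate''.

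This misdiagnosis propagates into your assembly step, which as stated does not close. If the per-cylinder error at scale $r$ were merely $O(b^{-k})$, summing over the $(b-1)b^{r-1}$ cylinders of that scale gives $O(b^{r-k})$, so you must tie the scale to $k$; and at the forced finest usable scale $r=k-1$ the dominant contribution is not an equidistribution defect at all (that defect is zero there) but the atomicity of $\mu_k$ \emph{inside} each cell of mass $b^{p-k+1}$, against which the oscillation of $1/y$ across the cell is what produces the telescoping bound $\sum_s b^{p-k+1}\bigl(b^{k-1}/n(s)-b^{k-1}/(n(s)+1)\bigr)=(b-1)b^{p-k+1}$. Your attribution of the final constant to ``the residue normalization together with the geometric series in $b^{-1}$'' is therefore the wrong mechanism, even though your integration-by-parts framing (sup-discrepancy at most one cell mass, times the total variation $b-1$ of $1/y$ on $[1/b,1]$) would deliver the same bound once the exact stabilization of Theorem \ref{thm:totalmass} is in hand. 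Your separate treatment of $k=1$ does match the paper, which uses Proposition \ref{prop:d1} to get the scale-one cylinder masses $b^{p-1}$ already at $k=1$, yielding $(b-1)b^{p-1}$ and explaining the $\max(k,2)$ in the statement.
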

This estimate is definitely not sharp; the author studied in
\cite{burnolasymptotic} the situation for one-digit blocks $w$, with $k$ fixed
and $b\to\infty$ and the results suggest, 
for $|w|=1$, that the distance of $S_w(k)$ to $b\log(b)$ is of the order
$b^{-2k}$ up to some factor depending on $b$ and the digit $w$.
\begin{remark}
\singlespacing
  The rules of
  \cite{alloucheshallit1989,huyining2016,allouchemorin2023,allouchehumorin2024}
  for counting (possibly overlapping) occurrences of $w$ in an integer $n$ are special
  if $w$'s first digit is $0$ and not all its digits are $0$: the integer $n$
  is first extended to the left with infinitely many zeros.  This either will
  not modify the number of occurrences or will increase it by exactly one
  unit.  Due to this, our $S_w(k)$ for such $w$'s is not the same quantity as
  considered in \cite{allouchehumorin2024}.  With $S_w(k)$ replaced by the one
  using the conventions in \cite{allouchehumorin2024}, a modified upper bound
  holds: our method obtains Theorem \ref{thm:limit} but with an extra $b$
  factor and the requirement $k\geq2$.
\par
\end{remark}

In \cite{burnolirwin} where we investigated the $S_w(k)$'s for $w$ a single-digit
block (Irwin series \cite{irwin}), we worked with formulae of the type
\begin{equation}\label{eq:loglike}
  S_w(k) = \int_{[b^{-1},1)} \frac{\mu_k(dx)}{x}\;,
\end{equation}
for some discrete measures $\mu_k$ on $[0,1)$.
In \cite{burnolone42} this was extended to the $|w|=2$ case.  In
\cite{burnolirwin,burnolone42}, the integral formula \eqref{eq:loglike} is
submitted to transformations induced from recurrence properties among the
$\mu_k$'s, and then converted, using the moments of the $\mu_k$'s, into series
allowing the numerical computation of the $S_w(k)$'s.

Here, we will not transform the integral formula  \eqref{eq:loglike} any
further but concentrate on what can be derived directly from it as $k$ goes
to $\infty$.  The essential fact which we establish is:
\begin{theorem}\label{thm:conv}
  The measures $\mu_k$ converge weakly to $b^p\dx$, i.e. for any interval
  $I\subset [0,1)$, there holds $\lim_{k\to\infty}  \mu_k(I) = b^p |I|$, where
  $|I|=\sup I - \inf I$ is the Lebesgue measure of $I$.
\end{theorem}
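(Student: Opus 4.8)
The plan is to test the measures against the $b$-adic intervals and to read off the limiting mass from the dominant pole, in the variable $y$, of a suitable occurrence-counting generating series evaluated at $x=b^{-1}$.

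First I would make the measures explicit. Writing $\ell(m)$ for the number of $b$-ary digits of $m$ and $c(m)$ for the number of occurrences of $w$, one has $\mu_k=\sum_{c(m)=k} b^{-\ell(m)}\,\delta_{mb^{-\ell(m)}}$, a discrete measure carried by $[b^{-1},1)$ for which $\int_{[b^{-1},1)} x^{-1}\mu_k(dx)=S_w(k)$. Since the $b$-adic intervals form a convergence-determining class and the total masses $\mu_k([b^{-1},1))$ are uniformly bounded, it suffices to prove $\mu_k(I)\to b^{p}|I|$ for $I=[ab^{-j},(a+1)b^{-j})$ with $a$ a $j$-digit integer; by additivity I may assume $j\ge p-1$. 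A point $mb^{-\ell(m)}$ lies in $I$ exactly when the digit string of $m$ is that of $a$ followed by an arbitrary block $s$, so with $f_{a,t}(y)=\sum_{|s|=t}y^{c(as)}$ and $G_a(x,y)=\sum_{t\ge0}f_{a,t}(y)\,x^{t}$ one obtains the exact identity $\mu_k(I)=b^{-j}\,[y^{k}]\,G_a(b^{-1},y)$. The whole problem is thereby reduced to showing $[y^{k}]\,G_a(b^{-1},y)\to b^{p}$, independently of $a$.

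Next I would strip the prefix. Because an occurrence of $w$ meeting $s$ can reach back at most $p-1$ letters into $a$, the count splits as $c(as)=c(a)+c(us)$, where $u$ is the length-$(p-1)$ suffix of $a$ (this is precisely the overlap bookkeeping governed by the Guibas--Odlyzko correlation). Hence $G_a(x,y)=y^{c(a)}G_u(x,y)$, and since $y^{c(a)}=1$ at $y=1$ the behaviour near $y=1$ depends only on the context $u$. The series $G_u$ is rational in $(x,y)$ by the Goulden--Jackson cluster formalism; equivalently, reading $s$ through the Aho--Corasick automaton that tracks the last $p-1$ letters and marks each occurrence by $y$ gives $G_u(x,y)=e_u^{\top}(I-xM(y))^{-1}\mathbf 1$ for a finite nonnegative matrix $M(y)$. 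Then I would carry out coefficient asymptotics in $y$ with $x$ frozen at $b^{-1}$. At $y=1$ the matrix $b^{-1}M(1)$ is stochastic, so its Perron eigenvalue is $1$ with right eigenvector $\mathbf 1$; the Perron projection yields a simple pole $G_u(b^{-1},y)=\rho/(1-y)+h(y)$ with $h$ holomorphic on a disc of radius $>1$. The residue $\rho$ is independent of $u$ precisely because the right eigenvector is $\mathbf 1$, so its $u$-component equals $1$ for every context; its value is pinned to $b^{p}$ by the cluster computation $\partial_t\xi(x,t)|_{t=0}=x^{p}$ (a single-occurrence cluster is $w$ itself), equivalently by matching the total mass $\mu_k([b^{-1},1))\to b^{p-1}(b-1)$. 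Singularity analysis then gives $[y^{k}]G_a(b^{-1},y)=b^{p}+O(R^{-k})$ for some $R>1$, whence $\mu_k(I)\to b^{-j}b^{p}=b^{p}|I|$, and the geometric rate is exactly what feeds the quantitative Theorem \ref{thm:limit}.

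The main obstacle I anticipate is the spectral input: proving that $y=1$ is the unique pole of smallest modulus and is simple, i.e. that $b^{-1}M(y)$ carries a strictly dominant, simple Perron branch through $y=1$ with all competing singularities pushed to $|y|>1$. This requires irreducibility and aperiodicity of the occurrence automaton at $x=b^{-1}$, and it is also the step that excludes oscillation of $[y^{k}]G_a(b^{-1},y)$. A secondary nuisance is the minimal-representation convention (leading digit nonzero, and the left-extension-by-zeros issue flagged in the Remark): these alter the raw string counts only by finitely many correction terms that are holomorphic at $y=1$, so they leave $\rho$ untouched, but they must be tracked to fix the constant exactly and to secure the stated error term.
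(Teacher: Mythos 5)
Your route --- a transfer matrix in the occurrence-marking variable $y$, Perron--Frobenius at $x=b^{-1}$, and coefficient asymptotics of $[y^k]G_a(b^{-1},y)$ --- is genuinely different from the paper's, which uses no spectral theory at all: by its Theorem \ref{thm:totalmass} the total mass of $k$-admissible strings with a fixed prefix $s$ of length $\ell$ is \emph{exactly} $b^p b^{-\ell}$ for every $k\geq 1+k_w^*(s)$, so $\mu_k(I)$ is identically $b^p|I|$ for $b$-imal intervals once $k$ exceeds the denominator exponent, and general intervals follow by a $\liminf$/$\limsup$ squeeze; in your language, $G_a(b^{-1},y)-b^p/(1-y)$ is a \emph{polynomial}, so the entire singularity analysis is replaced by exact combinatorics (elementary factorization at the leftmost occurrence, Lemmas \ref{lem:a}--\ref{lem:d}). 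The central problem with your proposal is that its crux is missing: the claim that $y=1$ is a simple pole of $G_u(b^{-1},y)$ and the unique singularity in a closed disc of radius $\geq 1$ is precisely what you defer as ``the main obstacle I anticipate.'' Without it there is no convergence statement at all, and nothing in your text supplies it. It is provable in your framework --- the de Bruijn automaton on length-$(p-1)$ contexts is primitive (strongly connected, with a weight-$1$ self-loop at a constant context $d^{p-1}$ with $d^p\neq w$, which exists since $b\geq2$), and a Wielandt-type comparison excludes eigenvalue $1$ of $b^{-1}M(y)$ for $|y|\leq1$, $y\neq1$ --- but that argument, the heart of the proof, is not carried out.

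Two further points are genuinely wrong as written. First, your determination of the residue $\rho=b^p$ is circular: ``matching the total mass $\mu_k([b^{-1},1))\to(b-1)b^{p-1}$'' assumes an instance of the very limit being proven (this is the paper's Proposition \ref{prop:d1}, itself derived from Theorem \ref{thm:totalmass}), and the cluster-derivative remark is a gesture, not a derivation. The non-circular computation inside your own setup: $b^{-1}M(1)$ is doubly stochastic, so the stationary vector is uniform, $\pi_z=b^{1-p}$, and the only occurrence-completing transition is from the context $u$ reading $d_p$, whence $c=\pi^{\top}b^{-1}M'(1)\mathbf{1}=b^{-1}\pi_u=b^{-p}$ and $\rho=1/c=b^p$; you state the right-eigenvector fact that makes $\rho$ context-independent but never actually pin its value. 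Second, your explicit description of $\mu_k$ is not the paper's measure: the paper sums over \emph{all} $k$-admissible strings, including those with leading zeros, so $\mu_k$ has total mass $b^p$ with mass $b^{p-1}$ on $[0,b^{-1})$; your measure, carried by $[b^{-1},1)$, makes the theorem as stated false for intervals $I\subset[0,b^{-1})$. Relatedly, your ``exact identity'' $\mu_k(I)=b^{-j}[y^k]G_a(b^{-1},y)$ is not exact for small $k$, because strings shorter than $j$ can map into $I$ (the paper's own example: $x(10)\in[0.100,0.101)$); this is harmless asymptotically, since $k$ occurrences force length at least $k+p-1$, but it must be said, as the paper does.
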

This explains the Farhi and Allouche-Hu-Morin theorem \eqref{eq:ahm}, in a
manner very different from the works of these authors.  Further, for intervals
$I=[t,u)$, with $t<u$ rational numbers with powers of $b$
denominators (``$b$-imal numbers''), the sequence $(\mu_k(I))$ becomes
constant for $k$ large enough, which is what will give the quantitative estimate
\ref{thm:limit}.

The contents are as follows: the next section will explore underlying
combinatorics and obtain the core result for our aims, which is Theorem
\ref{thm:totalmass}.  A general framework of Goulden-Jackson
(\cite{gouldenjackson1979}, \cite[\S2.8]{gouldenjacksonwiley}) could be
applied to obtain the shape of certain generating series which are parts of
the input to Theorem \ref{thm:totalmass}, but we shall work from first
principles.  Although it is not needed for our aims, a result of
Guibas-Odlyzko \cite{guibasodlyzko1981b} will also be proven.

Then in the last section we derive Theorems \ref{thm:limit} and \ref{thm:conv} as easy
corollaries.

\section{Generating series for 
         the \texorpdfstring{$w$}{w}-block counts}

It has definite advantages in the study of the $S_w(k)$'s to work not only
with integers but also with strings (they may also be called ``words''),
i.e.\@ elements of $\bsD=\cup_{l\geq0}\sD^l$, $\sD = \{0, \dots ,b-1\}$.  So
all strings $X$ considered here have a finite length $l=|X|$. The empty string
shall be denoted $\epsilon$, and $|\epsilon|=0$.  The length $|X|$ may also be
denoted $l(X)$.

Each such string of $b$-ary digits $X$ defines a non-negative integer $n(X)$
in the usual way of positional left to right notation.  We set $n(\epsilon) =
0$.  Conversely each non-negative integer $n$ has a minimal representation
$X(n)$ as a string, all others being with prepended leading zeros. In particular
$X(0)=\epsilon$.

For any string $X$ we let $k_w(X)$ be the number of (possibly overlapping) occurrences
of $w$ in $X$.
We say that a string is $k$-admissible if $k_w(X) = k$.  Let $N_w(k,l)$ for
each $k\geq 0$ and $l\geq 0$ be the number of $k$-admissible strings of
length $l$.  We let $Z_w(k)$ be the corresponding generating series in the
indeterminate $t$:
\begin{equation}
  Z_w(k) = \sum_{l=0}^\infty N_w(k,l) t^l\;.
\end{equation}
Whenever we subtitute $b^{-1}$ for $t$ in such a generating series, we say
that we compute the ``total mass'' of the objects counted by the series.
I.e.\@ each string $X$ is weighted as $b^{-|X|}$.  For example strings of
length $l$ have a total mass equal to $1$ (also for $l=0$, there is then only
one $X$, the empty string). Here is our main result:
\begin{theorem}\label{thm:totalmass}
  Let $s$ be any string of length $\ell$, and let $k_w^*(s) = \max k_w(sz)$
  where the maximum is taken over all strings $z$ of length $p-1$ (with
  $p=|w|$).  For each $k\geq 1 + k_w^*(s)$, the total mass of the $k$-admissible
  strings having $s$ as prefix is equal to $b^{p}\cdot b^{-\ell}$.  In
  particular for any $k\geq1$, $Z_w(k)(b^{-1})=b^p$.
\end{theorem}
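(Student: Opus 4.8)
The plan is to recast the ``total mass'' probabilistically and then read the assertion off the renewal structure of the occurrences of $w$ in a stream of independent uniform digits. First, the ``in particular'' clause is merely the case $s=\epsilon$: then $\ell=0$ and $k_w^*(\epsilon)=0$ (a word of length $p-1$ cannot contain $w$), while $Z_w(k)(b^{-1})=\sum_l N_w(k,l)b^{-l}$ is by definition the total mass of all $k$-admissible strings, i.e. of those with the empty prefix. So it suffices to treat a general prefix $s$ of length $\ell$. Writing each competitor as $X=sY$, the quantity to evaluate is $M_s(k):=b^{-\ell}\sum_Y b^{-|Y|}$, the sum over $Y$ with $k_w(sY)=k$. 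I would let $\xi_1,\xi_2,\dots$ be i.i.d.\ uniform on $\sD$ and set $K_m=k_w(s\xi_1\cdots\xi_m)$. Since $b^{-|Y|}$ is exactly the probability that $\xi_1\cdots\xi_{|Y|}=Y$, grouping the $Y$'s by their length turns the mass into an expectation:
\[
  M_s(k)=b^{-\ell}\sum_{m\ge 0}\Pr[K_m=k]=b^{-\ell}\,\mathbb{E}\bigl[\#\{m\ge 0: K_m=k\}\bigr].
\]
The decisive structural fact is that $(K_m)$ is nondecreasing and, because appending one digit can create an occurrence only at the new final position, $K_{m+1}-K_m\in\{0,1\}$. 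Hence, writing $\sigma_1<\sigma_2<\cdots$ for the positions in $s\xi_1\xi_2\cdots$ at which occurrences of $w$ end, the integer $\#\{m:K_m=k\}$ is simply the gap $\sigma_{k+1}-\sigma_k$ (once the count climbs to $k$ it stays there until the next occurrence), so the target reduces to $M_s(k)=b^{-\ell}\,\mathbb{E}[\sigma_{k+1}-\sigma_k]$: I must show the expected gap between consecutive occurrences equals $b^{p}$.

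Next I would exploit a renewal/restart property, and this is where $k_w^*(s)$ enters. Every occurrence ends with the letters of $w$, so the longest suffix of the word read so far that is a prefix of $w$ has length equal to the border $\pi$ of $w$ (its longest proper prefix that is also a suffix); crucially this state is \emph{deterministic}, independent of the past and of any self-overlaps, since a longer suffix–prefix match would exceed $|w|$. Therefore after any occurrence the process restarts from state $\pi$ under fresh i.i.d.\ digits, so the successive gaps share one distribution of some finite mean $G$, provided the $k$-th occurrence lies entirely in the random part. Now every occurrence ending at a position $\le\ell+p-1$ is an occurrence of $w$ in $s\xi_1\cdots\xi_{p-1}$, of which there are at most $k_w^*(s)=\max_{|z|=p-1}k_w(sz)$; hence for $k\ge 1+k_w^*(s)$ both the $k$-th and $(k+1)$-th occurrences end at positions $\ge\ell+p$, are ``clean'', and the strong Markov restart applies, giving $M_s(k)=b^{-\ell}G$.

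It then remains to evaluate $G$. The clean occurrences form a genuine renewal process with i.i.d.\ gaps of mean $G$, while each position beyond the first $\ell+p-1$ completes $w$ with probability exactly $b^{-p}$, so the expected number of occurrences among the first $N$ positions grows like $N b^{-p}$; a Wald / elementary-renewal comparison of $\mathbb{E}[\text{count up to }N]$ with $N$ forces $1/G=b^{-p}$, i.e.\ $G=b^{p}$, whence $M_s(k)=b^{p-\ell}$. I expect the main obstacle to be the overlap bookkeeping itself: arguing rigorously that the post-occurrence automaton state is the deterministic border $\pi$ no matter how badly $w$ self-overlaps, and that $k_w^*(s)$ is precisely the count that guarantees a clean $k$-th occurrence, so that the restart is legitimate. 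The second delicate point is obtaining $G=b^{p}$ as an exact identity rather than an asymptotic one, for which I would lean on the Wald inequalities rather than a bare limit. As a by-product this reformulation reproves the finiteness of $S_w(k)$, since $G$ is a finite pattern-waiting-time mean.
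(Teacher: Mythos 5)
Your proposal is correct in substance, and it takes a genuinely different route from the paper. The paper stays entirely inside formal power series: it cuts a $k$-admissible string at the leftmost occurrence of $w$ (Lemma~\ref{lem:a}), proves that the mass $M_w(v,k)$ of $v$-prefixed $k$-admissible strings is independent of $k$ by a growth dichotomy --- the value $\alpha\beta^k$ can neither grow geometrically (by the linear bound \eqref{eq:upper}) nor decay (since the total mass of $v$-prefixed strings diverges), forcing $\beta=1$ (Lemma~\ref{lem:b}) --- evaluates $M_w(v,0)=b$ and $M_w(v,0,u)=b^{2-p}$ algebraically (Lemma~\ref{lem:c}), and finally treats a general prefix $s$ by partitioning over the $(p-1)$-block $z$ that follows $s$, which is the exact counterpart of your split into boundary and clean occurrences. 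Your occupation-time identity $M_s(k)=b^{-\ell}\,\mathbb{E}[\sigma_{k+1}-\sigma_k]$, the deterministic post-occurrence boundary state (the last $p-1$ letters are always $v$, equivalently the automaton sits at the border of $w$), the strong-Markov restart, and the renewal-rate evaluation replace, respectively, Lemma~\ref{lem:a}, the gluing along $u$ and $v$, Lemma~\ref{lem:b} and Lemma~\ref{lem:c}; it is worth noting that the paper's proof of Lemma~\ref{lem:b} (linear upper bound versus divergence) is precisely the generating-function shadow of your computation $1/G=b^{-p}$. What the paper's route buys is the closed formula \eqref{eq:Zwk} for every $Z_w(k)$ and, with little extra effort, the Guibas-Odlyzko formula \eqref{eq:go}, none of which your argument produces. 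What your route buys is a conceptual explanation (each position beyond the boundary completes $w$ with probability exactly $b^{-p}$) and in fact a slightly stronger statement: all your restart needs at time $\sigma_k$ is that the string read so far ends with $w$, which holds as soon as $\sigma_k\geq 1$, i.e.\ as soon as $k\geq 1+k_w(s)$; your insistence that the $k$-th occurrence be entirely inside the random part is more conservative than necessary, and the hypothesis $k\geq 1+k_w^*(s)$ could be relaxed to $k\geq 1+k_w(s)$ (consistently with Proposition~\ref{prop:d1}, which the paper proves separately for $k=1$). Two details you should spell out to make the sketch a proof: finiteness of $G$ and of the initial delay (dominate the waiting time stochastically by $p$ times a geometric variable of parameter $b^{-p}$), and the exactness of $G=b^p$, for which the cleanest path is Wald's identity $\mathbb{E}[\sigma_{K_0+n}]=\mathbb{E}[\sigma_{K_0+1}]+(n-1)G$ compared with $\mathbb{E}[K_N-K_0]=(N-p+1)b^{-p}+c$, $0\leq c\leq p-1$, so that matching the two exact linear growth rates (the delayed elementary renewal theorem) legitimately yields the identity rather than a mere asymptotic.
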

Note that $k_w^*(s)\leq \ell$ in the above theorem so the conclusion applies to
any $k>\ell$.  As notational shortcut, in place of appending $(b^{-1})$ to
express evaluation at $b^{-1}$, we sometimes replace the letter $Z$ by $M$
(for ``mass''), so the theorem says in particular $M_w(k)=b^p$.

It turns out that the arguments which allowed us to prove Theorem
\ref{thm:totalmass} also allow to express all $Z_w(k)$'s, $k\geq1$, in
terms of $Z_w(0)$.  The existence of such expressions is
related to the fact that the doubly generating series
\begin{equation}
  \sum_{k=0}^\infty \sum_{l=0}^\infty N_w(k,l)r^kt^l = \sum_{k=0}^\infty Z_w(k)r^k\;,
\end{equation}
falls under the scope of a general theory of Goulden and Jackson (see
\cite{gouldenjackson1979}, \cite[\S2.8]{gouldenjacksonwiley}; see also
Exercise 14 from \cite[Chap. 4]{stanleyI} as an introduction) where a notion
of \emph{cluster generating function} is fundamental.  It would take us a bit
too far to discuss the details of the notion here, and is not needed as
we will obtain a complete description of the $Z_w(k)$ from first principles.

Guibas and Odlyzko define in \cite{guibasodlyzko1981b} the
\emph{auto-correlation polynomial} $A_w$ of a block $w$ as follows: $A_w =
\sum_{0\leq i<p} c_i t^i$ with $c_i=1$ if the prefix of $w$ of length $p-i$ is
identical with the suffix of the same length, else $c_i=0$.  Thus the indices
$i$ with $c_i=1$ are the \emph{periods} of $w$, i.e., there holds $d_{j} =
d_{i+j}$ for $1\leq j\leq p-i$.  For example, with $p=3$, $A_w=1+t+t^2$ for
$w=aaa$, $1+t^2$ for $w=aba$ with $b\neq a$, and $1$ else.  For all $p$'s, if
$c_1=1$ then $w$ contains only the same repeated letter hence all other
$c_i$'s have to be also $1$.  In \cite{guibasodlyzko1981a}, Guibas and
Odlyzko have characterized the polynomials realizable as such an $A_w$. In
\cite{guibasodlyzko1981b} they prove:
\begin{equation}\label{eq:go}
  Z_w(0) = \frac{A_w}{(1 - bt)A_w + t^p}\;.
\end{equation}
It turns out that we do not really need formula \eqref{eq:go}, but for
completeness we will include a proof.

The rationality of $Z_w(0)$ can also be obtained as a very special case of
Proposition 4.7.6 of \cite{stanleyI} which itself is an application of the
general \emph{transfer matrix} method.

Let us start with proving that  $b^{-1}$ is always in the open disk of
convergence.  This is most probably in the literature but we do not
know a reference.
\begin{lemma}
  Each $Z_w(k)$ has a radius of convergence at least equal to
  $(b^p-1)^{-p^{-1}}$.  And
  \begin{equation}\label{eq:upper}
    \sum_{0\leq j \leq k} Z_w(j)(b^{-1}) \leq p (k+1) b^p\;.
  \end{equation}
\end{lemma}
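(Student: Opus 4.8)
The plan is to bound the number $N_w(k,l)$ of $k$-admissible strings of length $l$ directly, using the combinatorial meaning of $k$-admissibility. The key elementary observation is that in a string of length $l$, an occurrence of $w$ (length $p$) can begin at any of the positions $1,\dots,l-p+1$, so there are at most $l-p+1\leq l$ possible starting positions. A string with exactly $k$ occurrences is therefore determined by first choosing which $k$ of these at most $l$ positions carry an occurrence, and then filling in the remaining ``free'' digits. Choosing $k$ starting positions among at most $l$ gives a factor bounded by $\binom{l}{k}$, and once the occurrence positions are fixed, the digits covered by the chosen occurrences are forced (they must equal the digits of $w$), while the digits not covered by any occurrence range freely over $\sD$, giving at most $b^{l-pk}$ completions when $pk\leq l$ (and forcing $N_w(k,l)=0$ when $l<pk$, since $k$ occurrences cannot fit). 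This yields a crude but serviceable bound of the shape $N_w(k,l)\leq \binom{l}{k} b^{l-pk}$.

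From such a bound the radius of convergence claim should follow by a root test. First I would feed the estimate into $Z_w(k)=\sum_l N_w(k,l)t^l$ and compare term by term with a dominating series; the factor $b^{-pk}$ combined with $\binom{l}{k}$ grows only polynomially in $l$ for fixed $k$, while the genuine exponential growth rate is governed by $b^l t^l$ corrected by how occurrences constrain the free digits. The stated radius $(b^p-1)^{-p^{-1}}$ is suggestive: writing $t=\rho$ one wants $\limsup_l N_w(k,l)^{1/l}\leq \rho^{-1}$ with $\rho^{-1}=(b^p-1)^{1/p}$, i.e. $N_w(k,l)$ should grow no faster than $(b^p-1)^{l/p}$ up to subexponential factors. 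The natural way to see this rate is to overcount strings containing at least one occurrence of $w$ by partitioning according to the position of, say, the first occurrence and setting up a subadditivity/supermultiplicativity argument on the counts of $w$-avoiding strings; the complementary count of strings with no occurrence in a window of length $p$ is $b^p-1$, and tiling the string into blocks of length $p$ produces the $(b^p-1)^{l/p}$ rate. I would make this precise by showing that the number of strings of length $l$ with at most (equivalently, with a bounded number of) occurrences is dominated by $C_k\,(b^p-1)^{l/p}$ with $C_k$ polynomial in $l$, which forces the radius of convergence to be at least $(b^p-1)^{-1/p}$.

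For the total-mass inequality \eqref{eq:upper}, the cleanest route is to interpret the left-hand side combinatorially rather than to manipulate the individual series. The sum $\sum_{0\leq j\leq k} Z_w(j)(b^{-1})$ is the total mass (with weight $b^{-|X|}$) of all strings $X$ with $k_w(X)\leq k$. I would organize these strings by length $l$ and by the number of occurrences, but more efficiently I would bound the total mass of the set of strings having \emph{at most} $k$ occurrences by relating it, via Theorem \ref{thm:totalmass}, to prefixes: the theorem already gives the exact total mass $b^p$ of the $j$-admissible strings for each fixed $j\geq 1$ once one restricts attention appropriately, so summing over $j=0,\dots,k$ should give roughly $(k+1)b^p$, and the extra factor $p$ absorbs the boundary contributions from short strings and overlap phenomena. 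The main obstacle here is making the passage from the exact statement of Theorem \ref{thm:totalmass} (which concerns $k$-admissible strings with a prescribed prefix $s$, valid only for $k\geq 1+k_w^*(s)$) to a clean inequality for the cumulative mass: one must handle the ``small $j$'' or ``short prefix'' regime where the hypothesis $k\geq 1+k_w^*(s)$ fails and the exact value $b^p$ is not yet available, and this is precisely where a cruder per-length bound (the $\binom{l}{k}b^{l-pk}$ estimate above) must be used as a fallback to control the finitely many exceptional contributions, with the combinatorial factor $p$ accounting for the number of residue classes of positions modulo the period structure of $w$.

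The hardest step I anticipate is not the radius-of-convergence bound, which is a routine root-test computation once the counting estimate is in place, but rather getting the constant right in \eqref{eq:upper}: specifically, justifying that the accumulated mass of strings with $\leq k$ occurrences does not exceed $p(k+1)b^p$ rather than some larger multiple. This requires either a careful inclusion–exclusion over occurrence positions that keeps the period polynomial $A_w$ under control, or a telescoping argument that leverages the recursive structure of admissible strings; I would expect the factor $p$ to emerge naturally from the fact that an occurrence of $w$ spans $p$ consecutive positions, so that overlapping constraints are spread across $p$ coordinates.
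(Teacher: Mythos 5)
There is a genuine gap, in two places. First, your crude bound $N_w(k,l)\leq\binom{l}{k}b^{l-pk}$, and the claim that $N_w(k,l)=0$ when $l<pk$, are both false as soon as $w$ overlaps itself --- which is precisely the difficulty the paper singles out in its introduction. Take $b=2$, $w=11$ (so $p=2$): the all-ones string of length $l$ contains $l-1$ occurrences of $w$, so $N_w(l-1,l)\geq 1$, whereas your bound gives $\binom{l}{l-1}2^{l-2(l-1)}=l\cdot 2^{2-l}<1$ for $l\geq 5$; the vanishing claim already fails for $111$, which has $k=2$ occurrences with $l=3<4=pk$. The error is that overlapping occurrences share covered positions, so fixing $k$ starting positions forces far fewer than $pk$ digits. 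A corrected version (any $k$ occurrences cover at least $p+k-1$ positions) only gives $N_w(k,l)\leq\binom{l}{k}b^{l-p-k+1}$, which grows like $b^l$ up to polynomial factors: that yields a radius of convergence $\geq b^{-1}$ only, strictly weaker than the claimed $(b^p-1)^{-1/p}$, and in particular does not give convergence \emph{at} $t=b^{-1}$. For the same reason this estimate cannot serve as the ``fallback'' you invoke to control the exceptional contributions to \eqref{eq:upper}: at $t=b^{-1}$ it produces the divergent majorant $b^{1-p-k}\sum_l\binom{l}{k}$.

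Second, your route to \eqref{eq:upper} through Theorem \ref{thm:totalmass} inverts the paper's logic: \eqref{eq:upper} is exactly what the proof of Lemma \ref{lem:b} uses to exclude geometric growth ($\beta>1$) on the way to Theorem \ref{thm:totalmass}, and the convergence half of the present lemma is what legitimizes every evaluation at $b^{-1}$ in the first place; moreover the theorem says nothing about the $j=0$ term, whose mass $M_w(0)=b^p A_w(b^{-1})$ the paper only obtains at the very end via the Guibas--Odlyzko analysis. The irony is that your tiling remark already contains the paper's entire proof, and you abandon it for a vague subadditivity plan. Execute it: write $l=qp+r$ with $0\leq r<p$; a string with at most $k$ occurrences of $w$ has at most $k$ of its $q$ aligned length-$p$ blocks equal to $w$, whence $\sum_{0\leq j\leq k}N_w(j,l)\leq\sum_{0\leq i\leq k}\binom{q}{i}(b^p-1)^{q-i}b^r$. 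Summing over $l$ gives the dominating series $\sum_{0\leq i\leq k}t^{pi}\bigl(1-(b^p-1)t^p\bigr)^{-(i+1)}\cdot\sum_{0\leq r<p}b^rt^r$, whose radius is $(b^p-1)^{-1/p}>b^{-1}$ --- the first claim --- and the step you miss is that substituting $t=b^{-1}$ into this \emph{same} dominating series proves \eqref{eq:upper} on the spot: since $1-(b^p-1)b^{-p}=b^{-p}$, each $i$-term equals $b^{-pi}/b^{-p(i+1)}=b^p$, the $r$-sum equals $p$, and the constant $p(k+1)b^p$ falls out mechanically (the factor $p$ comes from the $r$ leftover digits, not from residue classes of the period structure of $w$). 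One estimate does both jobs; no inclusion--exclusion, no $A_w$, no appeal to Theorem \ref{thm:totalmass}.
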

\begin{proof}
  Let us obtain an upper bound for $\sum_{0\leq j\leq k} N_w(j,l)$.  We write
  $l=qp + r$ with $0\leq r< p$.  We consider a string of length $l$ 
  as $q$ contiguous blocks each of length $p$ and $r$ extra digits.  If such a
  string of length $qp+r$ has at most $k$ occurrences of $w$, then in
  particular among the $q$ blocks of length $p$, at most $k$ of them are equal
  to $w$. For each $i$, $0\leq i\leq k$, there are $\binom{q}{i}(b^p-1)^{q-i}$
  ways (with $\binom{q}{i}=0$ for $i>q$) of obtaining a length $qp$
  string having exactly $i$ among its $q$ contiguous length $p$ blocks
  which are equal to $w$.  It is possible for such a string to have $>k$
  occurrences of $w$, but all strings of length $qp$ and $\leq k$ occurrences
  of $w$ are accounted for. Hence
  \begin{equation}
    \sum_{0\leq j\leq k} N_w(j,l) \leq \sum_{0\leq i\leq k}
    \binom{q}{i}(b^p-1)^{q-i}\times b^r
, \qquad l = q p + r\;, 
  \end{equation}
  where $b^r$ is for the remaining, unconstrained, $r$ digits. We obtain in
  this way a dominating series, i.e.\@ a series whose coefficients are at
  least equal to the (absolute values of the) original ones in the termwise
  addition $\sum_{0\leq j \leq k} Z_w(j)$ (which has only non-negative
  coefficients):
  \begin{equation}
    \sum_{0\leq i\leq k}
    \sum_{q=0}^\infty \binom{q}{i}(b^p-1)^{q-i} t^{pq}\times \sum_{0\leq r < p} b^r t^r\;.
  \end{equation}
  We recognize therein binomial series and reformulate the dominating series as
  \begin{equation}
    \sum_{0\leq i\leq k}
    \frac{t^{pi}}{(1 - (b^p-1)t^p)^{i+1}}\times \sum_{0\leq r < p} b^r t^r\;.
  \end{equation}
  The radius of convergence of $\sum_{l=0}^\infty (\sum_{0\leq j\leq k}
  N_w(j,l))t^l$ is at least the one of the dominating series, which from the
  above is $R=(b^p-1)^{-p^{-1}}$.  Hence the radii of convergence of the
  individual series $Z_w(j)=\sum_{l=0}^\infty N_w(j,l)t^l$ with non-negative
  coefficients all are $\geq R$.  This $R$ is larger than $b^{-1}$, and
  substituting $t= b^{-1}$ leads to the upper bound \eqref{eq:upper}.
\end{proof}
\begin{lemma}
  For all $k\geq0$, $S_w(k) < \infty$.
\end{lemma}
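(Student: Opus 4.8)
The plan is to dominate $S_w(k)$ termwise by the total mass $Z_w(k)(b^{-1})$, whose finiteness is already guaranteed by the preceding lemma. First I would group the positive integers $m$ contributing to $S_w(k)$ according to the length $l=|X(m)|$ of their minimal representation. An integer $m\geq1$ whose minimal representation has exactly $l$ digits satisfies $b^{l-1}\leq m$, hence $m^{-1}\leq b\cdot b^{-l}$; that is, the harmonic weight $m^{-1}$ is controlled by $b$ times the total-mass weight $b^{-l}$ attached to the string $X(m)$.

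Next I would count, for each $l$, how many of these integers carry exactly $k$ occurrences of $w$. The minimal representations of length $l$ are precisely the length-$l$ strings with nonzero leading digit, so their number with $k_w=k$ is at most $N_w(k,l)$, the total number of $k$-admissible strings of length $l$ (the only strings discarded are those beginning with $0$). Summing over $l\geq1$ then gives
\begin{equation*}
  S_w(k) = \sum_{l\geq1}\ \sum_{\substack{m\geq1,\ |X(m)|=l\\ k_w(X(m))=k}} \frac{1}{m}
  \ \leq\ b\sum_{l\geq1} N_w(k,l)\,b^{-l}
  \ \leq\ b\,Z_w(k)(b^{-1})\;.
\end{equation*}
Finally, since the preceding lemma shows that the radius of convergence of $Z_w(k)$ is at least $(b^p-1)^{-p^{-1}}>b^{-1}$, the value $Z_w(k)(b^{-1})$ is finite (indeed $\leq p(k+1)b^p$ by \eqref{eq:upper}), and the displayed inequality yields $S_w(k)<\infty$ for every $k\geq0$.

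There is essentially no hard step here; the only point requiring a little care is the bookkeeping between integers and strings — namely the factor $b$ arising from $b^{l-1}\leq m<b^l$, and the observation that passing from minimal representations to all length-$l$ strings merely enlarges the count. The subtlety that occurrence counts may behave specially when $w$ begins with $0$ (as flagged in the Remark) does not intervene, since $k_w$ is here evaluated throughout on the single fixed string $X(m)$.
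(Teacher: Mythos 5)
Your proposal is correct and follows essentially the same route as the paper: both group the contributing integers by the length $l$ of their representation, bound each length-$l$ contribution by $b^{1-l}N_w(k,l)$ (your $m^{-1}\leq b\cdot b^{-l}$ step), and conclude $S_w(k)\leq b\,Z_w(k)(b^{-1})<\infty$ via the preceding lemma's radius-of-convergence bound. Your write-up merely makes explicit the bookkeeping (minimal representations versus all length-$l$ strings) that the paper leaves implicit.
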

\begin{proof}
  Consider the contribution to $S_w(k)$ by integers of a given length
  $l\geq1$, i.e. those integers in $[b^{l-1},b^l)$ having $k$ occurrences of
  $w$.  This contribution is bounded above by $b^{1-l}N_w(k,l)$, hence
  $S_w(k)\leq b Z_w(k)(b^{-1}) < \infty$.
\end{proof}
We have defined the generating series $Z_w(k)$. Let us similarly define
$Z_w(x,j,y)$ as the generating series of the counts per length of
$k$-admissible strings which start with a given block $x$ and end with a given
block $y$.  We will mainly use them for $x$ and $y$ being both of length
$p-1$.  So if $p=1$, $x$ and $y$ will then both be the empty string $\epsilon$
and add no constraints.  Let us further define similarly $Z_w(x,j)$ as a
shortcut for $Z_w(x,j,\epsilon)$, i.e.\@ the counted $j$-admissible strings
have to start with $x$ and no condition on how they end.  We will also need
$Z_w(\epsilon,j,y)$.  We can not denote it $Z_w(j,y)$ as the two-argument $Z_w$
is already defined.

We let $u$ be the  prefix (initial sub-block) $d_1\dots d_{p-1}$ of $w$
and $v$ its suffix (terminating sub-block) $d_2\dots d_{p}$.
\begin{lemma}\label{lem:a}
  Let $k\geq1$.  Then for any $(p-1)$-block $s$ one has
  \begin{equation}
    Z_w(s,k) = t^{2-p} Z_w(s,0,u)Z_w(v,k-1)\;.
  \end{equation}
\end{lemma}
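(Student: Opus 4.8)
The plan is to prove the identity by exhibiting a length-tracking bijection, obtained by cutting a $k$-admissible word at its \emph{first} occurrence of $w$, between the words counted by the left-hand side and the pairs of words counted on the right. Fix a $k$-admissible string $X$ having $s$ as prefix. Since $k\geq1$, the block $w$ occurs in $X$; let $i$ be the starting position of its left-most occurrence, so that the digits of $X$ in positions $i,\dots,i+p-1$ spell $w=d_1\dots d_p$. I split $X$ into the prefix $P$ formed by positions $1,\dots,i+p-2$ and the suffix $Q$ formed by positions $i+1,\dots,|X|$. The cut is chosen so that $P$ terminates with $d_1\dots d_{p-1}=u$ (its positions $i,\dots,i+p-2$), while $Q$ begins with $d_2\dots d_p=v$ (its positions $i+1,\dots,i+p-1$); note $|Q|\geq p-1$ is automatic, as the first occurrence forces $|X|\geq i+p-1$.

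Next I would read off the combinatorial type of each piece. No occurrence of $w$ can lie inside $P$: one starting at a position $j\leq i-1$ would end at position $j+p-1\leq i+p-2$, hence lie entirely within $P$, contradicting the minimality of $i$. Thus $P$ is $0$-admissible with prefix $s$ and suffix $u$, so it is counted by $Z_w(s,0,u)$. On the other hand $Q$ retains every occurrence of $w$ in $X$ except the one at position $i$, which is not wholly contained in positions $\geq i+1$; hence $k_w(Q)=k-1$, and with its prefix $v$ the word $Q$ is counted by $Z_w(v,k-1)$.

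For the lengths, $P$ and $Q$ share exactly the stretch in positions $i+1,\dots,i+p-2$, of length $p-2$. This is a genuine overlap when $p\geq2$ and degenerates, when $p=1$, into a one-digit gap (the single digit $X[i]$, which belongs to neither piece). In both cases one obtains $|X|=|P|+|Q|+(2-p)$, which is the source of the prefactor $t^{2-p}$. Summing $t^{|X|}$ over all admissible $X$ with prefix $s$ and factoring according to the pair $(P,Q)$ then yields $t^{2-p}Z_w(s,0,u)Z_w(v,k-1)$, provided the assignment $X\mapsto(P,Q)$ is a bijection onto all such pairs.

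The inverse map glues an arbitrary $0$-admissible $P$ ending in $u$ to an arbitrary $(k-1)$-admissible $Q$ beginning with $v$ along their common central block $d_2\dots d_{p-1}$ (for $p=1$ one instead simply inserts the digit $d_1$ between $P$ and $Q$); this identification is legitimate precisely because the suffix $u$ of $P$ and the prefix $v$ of $Q$ share that block. The seam then reads $d_1\dots d_{p-1}$ off the tail of $P$ followed by $d_p$ off $Q$, producing one fresh copy of $w$. The step I expect to be most delicate is verifying that the two maps are mutually inverse: one must check that gluing introduces \emph{no spurious} occurrence of $w$ straddling the seam beyond the intended one, and that this created occurrence is genuinely the left-most in the reconstructed word, so that the count $k-1$ on $Q$ lifts back to exactly $k$ on $X$. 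The length bookkeeping above (cutting one digit short of the end of the occurrence) is exactly what makes this work, since it forces any occurrence beginning at a position $\leq i$ to end within $P$. The boundary cases $p=1$ (empty $u$ and $v$, a shift rather than an overlap) and $s=u$ (where $|P|$ may be as small as $p-1$) warrant a separate glance to confirm they conform.
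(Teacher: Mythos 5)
Your proof is correct and takes essentially the same route as the paper's: cut the string at the left-most occurrence of $w$ so that the first piece ends with $u$ and the second begins with $v$, observe the pieces are $0$- and $(k-1)$-admissible respectively, and recover the identity as a Cauchy product corrected by $t^{2-p}$ for the $p-2$ shared digits (or the inserted digit when $p=1$) — indeed you verify the bijectivity (no occurrence straddling the seam other than the intended one, which is left-most) in more detail than the paper's terse argument. One off-by-one slip in your closing remark: it is occurrences beginning at positions $\leq i-1$, not $\leq i$, that are forced to end within $P$ (the seam occurrence at position $i$ ends one digit past $P$), but you state this correctly earlier in the proof, so nothing is at stake.
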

\begin{proof}
  Cut the $k$-admissible string starting with $s$ into two pieces: the first
  one ends with the $u$ part of the left-most occurrence of $w$.  The second
  piece starts with the $v$ part of this left-most occurrence of $w$.  This
  second piece $R$ has exactly $k-1$ occurrences of $w$.  The first piece $L$
  has none.  Conversely any two such pieces $L$ and $R$ can be glued together
  along their $p-2$ common digits (if $p=1$, this means creating $LwR$, if
  $p=2$ it is simply concatenation $LR$, if $p>2$, it is gluing with the
  $p-2$ digits overlap).  So we have a Cauchy product of generating functions
  which we must correct by the factor $t^{2-p}$ for matters of total
  lengths.
\end{proof}
Using this lemma with $s=v$ the $(p-1)$-suffix of $w$ we obtain in particular,
for all $k\geq0$:
  \begin{equation}\label{eq:Zvk}
    Z_w(v,k) = \Bigl(t^{2-p} Z_w(v,0,u)\Bigr)^{k} Z_w(v,0)\;.
  \end{equation}
To shorten notation, when evaluating at $b^{-1}$, in place of appending
$(b^{-1})$ we replace the letter $Z$ by $M$ (for total mass).
\begin{lemma}\label{lem:b}
  $M_w(v,k) = Z_w(v,k)(b^{-1})$ is a constant which is independent of $k\geq0$.
\end{lemma}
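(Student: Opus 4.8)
The plan is to read off the multiplicative shape of $Z_w(v,k)$ from \eqref{eq:Zvk} and then pin down the one free parameter using Theorem \ref{thm:totalmass}. First I would substitute $t=b^{-1}$ into \eqref{eq:Zvk}. Since $t^{2-p}$ becomes $b^{\,p-2}$ at $t=b^{-1}$, this gives
\begin{equation*}
  M_w(v,k) = C^{\,k}\, M_w(v,0), \qquad C := b^{\,p-2}\,M_w(v,0,u)\;,
\end{equation*}
so that $(M_w(v,k))_{k\geq0}$ is a geometric progression with ratio $C$ and initial term $M_w(v,0)$. The whole lemma thus reduces to the single assertion $C=1$, together with the observation that $M_w(v,0)>0$: indeed $v$ has length $p-1<p$, hence contains no occurrence of $w$, so $v$ is itself a $0$-admissible string beginning with $v$ and contributes to $M_w(v,0)$.

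Next I would extract the eventual value of the sequence from Theorem \ref{thm:totalmass}, applied with the prefix $s=v$, a string of length $\ell=p-1$. Because $k_w^*(v)\leq \ell$ is finite, the theorem asserts that for every $k\geq 1+k_w^*(v)$ the total mass of the $k$-admissible strings having $v$ as prefix equals $b^{p}\cdot b^{-(p-1)}=b$. But that total mass is exactly $M_w(v,k)$, so $M_w(v,k)=b$ for all sufficiently large $k$.

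The key step is then to reconcile the two descriptions. The ratio $C=b^{\,p-2}M_w(v,0,u)$ is a nonnegative real number, being (a multiple of) a total mass. A geometric progression $C^{\,k}M_w(v,0)$ with positive initial term is eventually equal to the strictly positive constant $b$ only when $C=1$: were $C>1$ or $0\leq C<1$, the sequence would be strictly monotone (diverging to $\infty$ or decreasing toward $0$) and could never stabilize at the positive value $b$. Hence $C=1$, which forces $M_w(v,k)=M_w(v,0)$ for all $k\geq0$ and establishes the claimed independence of $k$ (with common value $b$). I do not anticipate a genuine obstacle; the only points that need care are checking that Theorem \ref{thm:totalmass} does apply to the prefix $v$ and that its output $b^{p}b^{-(p-1)}$ is literally the quantity $M_w(v,k)$ appearing in \eqref{eq:Zvk}. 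Once the eventual constancy is secured, the geometric-progression argument fixes $C=1$ with no further computation.
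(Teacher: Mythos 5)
Your argument is correct, but it is not the paper's argument, and it rests on a dependency question you should have addressed head-on. The paper proves Lemma \ref{lem:b} \emph{before} Theorem \ref{thm:totalmass} and by a purely qualitative growth argument: writing $M_w(v,k)=\alpha\beta^k$ from \eqref{eq:Zvk}, it rules out $\beta>1$ (with $\alpha>0$) because $M_w(v,j)\leq M_w(j)$ (strings prefixed by $v$ are a subset of all strings) and the partial sums $\sum_{0\leq j\leq k}M_w(j)$ grow at most \emph{linearly} in $k$ by \eqref{eq:upper}, which is incompatible with geometric growth; and it rules out $\beta<1$ or $\alpha=0$ because $\sum_{k\geq0}M_w(v,k)$ is the total mass of \emph{all} strings starting with $v$, namely $\sum_{l\geq p-1}b^{l-p+1}\,b^{-l}=+\infty$. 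Hence $\beta=1$, with no exact evaluation of any mass needed. You instead import the eventual value $M_w(v,k)=b$ (for $k\geq 1+k_w^*(v)$, in particular $k\geq p$) from Theorem \ref{thm:totalmass} applied to $s=v$, and then force $C=1$ because a geometric sequence with positive initial term cannot stabilize at the positive value $b$ unless its ratio is $1$. That reasoning is sound, and your subsidiary checks (that $v$, having length $p-1<p$, is itself $0$-admissible so $M_w(v,0)>0$, and that the theorem's ``total mass'' is literally $Z_w(v,k)(b^{-1})$) are correct.

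The caveat is circularity: Theorem \ref{thm:totalmass} is the terminal result of the section and Lemma \ref{lem:b} sits on the road toward it, so citing the theorem here is only legitimate if its proof does not use the lemma. It happens that it does not: the paper proves Theorem \ref{thm:totalmass} from Lemma \ref{lem:a}, equation \eqref{eq:Zvk}, Lemma \ref{lem:c}, and Lemma \ref{lem:d}, never invoking Lemma \ref{lem:b} (whose content is in fact re-derived by Lemma \ref{lem:c}). So your proof is valid, but once you grant yourself that machinery it is roundabout: equation \eqref{eq:v0u} evaluated at $t=b^{-1}$ (legitimate since the radii of convergence exceed $b^{-1}$) gives $1-b^{p-2}M_w(v,0,u)=0$, i.e.\@ exactly your ratio $C=1$, and \eqref{eq:v0} gives $M_w(v,0)=b$, with no need for the full strength of Theorem \ref{thm:totalmass}. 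What the paper's softer route buys is precisely its placement in the development: constancy of $M_w(v,k)$ is obtained a priori from growth estimates alone, before any exact mass is computed; your route proves nothing more while presupposing strictly more. (Note also that the paper records, after Proposition \ref{prop:d1}, a second independent derivation of the lemma via a digit-decomposition of the mass of $k$-admissible strings starting with $d_1$.)
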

\begin{proof}
  From \eqref{eq:Zvk}, $M_w(v,k)=\alpha \beta^k$, for some
  $\alpha,\beta\geq0$.  The hypothesis $\beta>1, \alpha>0$ is incompatible
  with the (linearly divergent, not geometrically) upper bound
  \eqref{eq:upper}.  And the hypothesis $\beta<1$ or $\alpha=0$ is
  incompatible with the fact that $\sum_{k=0}^\infty Z_w(v,k)(b^{-1}) =
  \infty$.  Indeed this sum evaluates the total mass of all strings starting
  with $v$ and weighted by $b^{-l}$ where $l$ is their length, and is thus
  infinite.  So $\beta=1$.
\end{proof}
The next lemma will imply that the constant value of $M_w(v,k)$, $k\geq0$, is $b$.
\begin{lemma}\label{lem:c}
  The following relations hold and show that knowing one of $Z_w(0)$,
  $Z_w(v,0)$ or $Z_w(v,0,u)$ determines the other two.  They also imply that
  $M_w(v,0) = b$ and $M_w(v,0,u) = b^{2-p}$.
  \begin{align}
\label{eq:v0u}
    (1 - bt) t^{1-p}Z_w(v,0) &= 1 - t^{2-p} Z_w(v,0,u)\;,
\\
\label{eq:v0}
    (1 - bt)Z_w(0) &= 1  - t Z_w(v,0)\;.
  \end{align}
\end{lemma}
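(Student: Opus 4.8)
The plan is to derive each of the two identities by a single-letter extension argument, exploiting the elementary fact that appending (or prepending) one digit to a string which avoids $w$ can create at most one new occurrence of $w$, and that this occurrence, if present, must be the one ending (resp.\@ starting) at the newly added position. All strings below are tacitly $0$-admissible, i.e.\@ avoid $w$. Throughout, a factor $t$ records an increase of length by one and a factor $b$ records a free choice of digit in $\sD$.

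For \eqref{eq:v0u} I would begin from $t\,b\,Z_w(v,0)$, which is the generating series of pairs $(X,d)$ with $X$ starting with $v$ and avoiding $w$ and $d\in\sD$ arbitrary, such a pair carrying weight $t^{|X|+1}$. I then split these pairs according to whether $Xd$ still avoids $w$. In the first case $Xd$ ranges exactly over the strings starting with $v$, avoiding $w$, and of length $\geq p$; since the only such string of length $p-1$ is $v$ itself (and $v$, being shorter than $w$, does avoid $w$), this part contributes $Z_w(v,0)-t^{p-1}$. In the second case a new occurrence appears at the end, which happens precisely when $X$ terminates with $u=d_1\dots d_{p-1}$ and $d=d_p$; these pairs contribute $t\,Z_w(v,0,u)$. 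Hence $t\,b\,Z_w(v,0)=\bigl(Z_w(v,0)-t^{p-1}\bigr)+t\,Z_w(v,0,u)$, and rearranging and multiplying by $t^{1-p}$ yields \eqref{eq:v0u}.

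The identity \eqref{eq:v0} is obtained in the same spirit but by prepending. I would write $t\,b\,Z_w(0)$ as the series of pairs $(d,X)$ with $X$ avoiding $w$ and $d\in\sD$, and split according to whether $dX$ avoids $w$. The strings $dX$ that avoid $w$ are exactly the nonempty $w$-avoiding strings (remove the first digit), contributing $Z_w(0)-1$; a new occurrence at the front appears precisely when $d=d_1$ and $X$ begins with $v=d_2\dots d_p$, contributing $t\,Z_w(v,0)$. This gives $t\,b\,Z_w(0)=\bigl(Z_w(0)-1\bigr)+t\,Z_w(v,0)$, which is \eqref{eq:v0}. Since the two identities are invertible affine-linear relations linking $Z_w(0)$ with $Z_w(v,0)$ and $Z_w(v,0)$ with $Z_w(v,0,u)$ (the factors $t$ and $1-bt$ being units in the ring of formal power series), each of the three series is determined by any one of them.

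Finally, for the total masses I would evaluate both identities at $t=b^{-1}$. By the preceding lemma all three series have radius of convergence exceeding $b^{-1}$, so these evaluations are finite, while the left-hand factor $1-bt$ vanishes at $t=b^{-1}$. Thus \eqref{eq:v0} collapses to $0=1-b^{-1}M_w(v,0)$, giving $M_w(v,0)=b$, and \eqref{eq:v0u} collapses to $0=1-b^{\,p-2}M_w(v,0,u)$, giving $M_w(v,0,u)=b^{2-p}$. I expect the only delicate points to be the length bookkeeping behind the $t^{1-p}$ and $t^{2-p}$ shifts and the degenerate case $p=1$, where $u$ and $v$ are both empty; one checks directly that the extension arguments persist there, the conditions ``$X$ ends with $u$'' and ``$X$ begins with $v$'' becoming vacuous.
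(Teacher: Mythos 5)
Your proof is correct, but for \eqref{eq:v0u} it takes a genuinely different route from the paper. The paper obtains \eqref{eq:v0u} globally: it sums the product formulas \eqref{eq:Zvk} (themselves consequences of Lemma \ref{lem:a}) over all $k\geq0$, observes that $\sum_{k\geq0}Z_w(v,k)$ enumerates \emph{all} strings with prefix $v$ and hence equals $t^{p-1}/(1-bt)$, and reads off \eqref{eq:v0u} from the resulting geometric-series identity $t^{p-1}/(1-bt)=Z_w(v,0)/\bigl(1-t^{2-p}Z_w(v,0,u)\bigr)$, the remark $\ord_0\bigl(t^{2-p}Z_w(v,0,u)\bigr)\geq1$ making the formal summation licit. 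Your last-letter extension argument --- splitting the pairs $(X,d)$ according to whether $Xd$ still avoids $w$, with the new occurrence forced to be the length-$p$ suffix --- proves \eqref{eq:v0u} directly, without invoking Lemma \ref{lem:a} or \eqref{eq:Zvk} at all; this is more elementary and self-contained, while the paper's route exhibits \eqref{eq:v0u} as forced by the already-established product structure of the $Z_w(v,k)$. For \eqref{eq:v0}, your prepending decomposition is, after rearrangement, exactly the paper's identity $Z_w(0)=1+(b-1)tZ_w(0)+t\bigl(Z_w(0)-Z_w(v,0)\bigr)$, and your evaluation at $t=b^{-1}$ (the factor $1-bt$ vanishing) is also the paper's. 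Two cosmetic repairs: the preceding lemma is stated for the $Z_w(k)$, so to evaluate $Z_w(v,0)$ and $Z_w(v,0,u)$ at $b^{-1}$ you should add the one-line remark that these series are termwise dominated by $Z_w(0)=\sum_{k\le 0+0}\dots$, i.e.\ by a series already known to converge there (their coefficients count subsets of the $0$-admissible strings); and $t$ is \emph{not} a unit in the ring of formal power series, though your ``determines the other two'' claim survives because the right-hand sides to be divided by $t$ (resp.\ by powers of $t$) have the requisite positive order, e.g.\ $1-(1-bt)Z_w(0)$ has zero constant term, which is precisely the kind of order bookkeeping the paper's $\ord_0$ remark handles.
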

\begin{proof}
  One surely has $\ord_{0}(t^{2-p}Z_w(v,0,u))\geq 1$, so it is licit in
  the algebra of formal power series to compute the sums of the equations
  \eqref{eq:Zvk} for all $k\geq0$, to obtain the identity
  \begin{equation}
    \frac{t^{p-1}}{1 - bt} 
  = \sum_{k=0}^\infty Z_w(v,k) = \frac{Z_w(v,0)}{1 - t^{2-p} Z_w(v,0,u)}\;,
\end{equation}
hence \eqref{eq:v0u}.  Let $x = dx'$ be a $0$-admissible string, of length at
least $1$.  If $d\neq d_1$ (where $d_1$ is the first digit of $w$), then $x'$
is an arbitrary (possibly empty) $0$-admissible string. If $d=d_1$ then $x=d_1
x'$ with $x'$ an arbitrary (possibly empty) admissible string not starting
with the $(p-1)$-suffix $v$ of $w$.  Hence
  \begin{equation}
    Z_w(0) =  1 + (b-1)t Z_w(0) + t (Z_w(0) - Z_w(v,0))\;,
  \end{equation}
which gives the second equation.

Substituting $t=b^{-1}$ we obtain the stated evaluations of $M_w(v,0)$ and
$M_w(v,0,u)$.
\end{proof}
\begin{lemma}
  The two series $Z_w(\epsilon,0,u)$ and $Z_w(v,0)$ are identical.
\end{lemma}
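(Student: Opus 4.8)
The plan is to prove the identity not by a direct length-preserving bijection, but by mirroring the decomposition of $Z_w(0)$ already used in the proof of Lemma~\ref{lem:c}. (Reversal is tempting, but it sends $w$-avoiding strings to $w^R$-avoiding strings, so it does not directly compare $Z_w(\epsilon,0,u)$ with $Z_w(v,0)$ unless $w$ happens to be a palindrome.) In Lemma~\ref{lem:c} a nonempty $0$-admissible string was split off at its \emph{first} digit, which produced the block $v$; I would instead split it off at its \emph{last} digit, which by symmetry produces the block $u$ and brings $Z_w(\epsilon,0,u)$ into exactly the position occupied by $Z_w(v,0)$ before.

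Concretely, I would let $Y$ be a $0$-admissible string of length at least $1$ and write $Y=Y'd$ with $d$ its last digit. If $d\neq d_p$ (the last digit of $w$), then appending $d$ to $Y'$ can never complete an occurrence of $w$, so $Y'$ ranges over all $0$-admissible strings, contributing $(b-1)t\,Z_w(0)$. If $d=d_p$, then since $Y'$ already avoids $w$, the only way $Y=Y'd_p$ could contain $w$ is by terminating in $w=d_1\dots d_{p-1}d_p$, i.e.\ by $Y'$ terminating in the $(p-1)$-prefix $u=d_1\dots d_{p-1}$; hence $Y'$ must be $0$-admissible and not end in $u$, contributing $t\bigl(Z_w(0)-Z_w(\epsilon,0,u)\bigr)$. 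Adding the empty string, this yields
\begin{equation*}
  Z_w(0) = 1 + (b-1)t\,Z_w(0) + t\bigl(Z_w(0) - Z_w(\epsilon,0,u)\bigr)\;.
\end{equation*}

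To finish, I would compare this with the first-digit identity $Z_w(0)=1+(b-1)t\,Z_w(0)+t\bigl(Z_w(0)-Z_w(v,0)\bigr)$ from Lemma~\ref{lem:c}: all terms agree except the last, so after cancellation $t\,Z_w(\epsilon,0,u)=t\,Z_w(v,0)$, whence $Z_w(\epsilon,0,u)=Z_w(v,0)$ as formal power series. The one point deserving care—and the only place the argument could go wrong—is the $d=d_p$ case: one must check that no occurrence of $w$ other than a terminal one can be manufactured by appending a single digit (which holds precisely because $Y'$ is already $w$-avoiding, so any new occurrence must use the final position), and one should dispatch the degenerate case $p=1$, where $u=v=\epsilon$ forces $Z_w(\epsilon,0,u)=Z_w(v,0)=Z_w(0)$ and both identities collapse to the same trivial statement.
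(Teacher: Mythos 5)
Your proof is correct. The last-digit decomposition is sound: since $Y'$ avoids $w$, any new occurrence of $w$ in $Y'd$ must end at the final position, which forces $d=d_p$ and $Y'$ terminating in $u$; this gives $(1-bt)Z_w(0)=1-t\,Z_w(\epsilon,0,u)$, and comparison with \eqref{eq:v0} yields the claim (cancelling $t$ is licit, as $t$ is not a zero divisor in the formal power series ring), with the degenerate $p=1$ case correctly dispatched. Interestingly, the paper takes exactly the route you rejected in your parenthetical: it applies the already-proved identity \eqref{eq:v0} to the reversed word $r(w)$, whose $(p-1)$-suffix is $r(u)$, obtaining $(1-bt)Z_{r(w)}(0)=1-t\,Z_{r(w)}(r(u),0)$, and then uses the length-preserving reversal bijection to identify $Z_{r(w)}(0)=Z_w(0)$ and $Z_{r(w)}(r(u),0)=Z_w(\epsilon,0,u)$. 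Your objection is right as far as it goes---reversal is not a direct bijection between the two counted families unless $w$ is a palindrome---but the paper sidesteps this by using reversal to \emph{transport the identity} rather than the strings: \eqref{eq:v0} holds uniformly in the word, hence for $r(w)$. The two arguments are mirror images of one another; your last-digit recursion is precisely what the paper's first-digit recursion for $r(w)$ becomes when read through reversal. What your version buys is self-containedness (no need to remark that Lemma \ref{lem:c} is uniform in $w$), at the cost of redoing the case analysis; the paper's version buys brevity by reusing the lemma verbatim.
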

\begin{proof}
  Let $r(w)$ be the reversed block.  Its $(p-1)$-suffix is $r(u)$.  We thus
  have $(1-bt)Z_{r(w)}(0) = 1 - t Z_{r(w)}(r(u),0)$.  But mapping strings to
  their reversals, it is clear that $Z_{r(w)}(0) = Z_w(0)$, and
  $Z_{r(w)}(r(u),0) = Z_w(\epsilon,0,u)$.  Thus $Z_w(\epsilon,0,u) =
  Z_w(v,0)$.
\end{proof}

\begin{lemma}\label{lem:d}
  For $k\geq1$ one has 
  \begin{equation}\label{eq:Zwk}
    Z_w(k) = t^{2-p}\Bigl(t^{2-p}Z_w(v,0,u)\Bigr)^{k-1} Z_w(v,0)^2\;,
  \end{equation}
  which via equations \eqref{eq:v0u} and \eqref{eq:v0} can be used to express
  $Z_w(k)$ in terms of $Z_w(0)$.
\end{lemma}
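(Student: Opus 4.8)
The plan is to rerun the leftmost-occurrence decomposition of Lemma~\ref{lem:a}, but now with \emph{no} constraint on how the string begins. The cleanest way to organize this is to sum Lemma~\ref{lem:a} over all possible $(p-1)$-prefixes. Indeed, a $k$-admissible string with $k\geq1$ necessarily has length at least $p$, hence possesses a well-defined $(p-1)$-prefix $s$, so it is counted exactly once in $Z_w(s,k)$ for that particular $s$; summing over the $b^{p-1}$ blocks $s$ of length $p-1$ therefore gives $Z_w(k)=\sum_{s}Z_w(s,k)$. Applying Lemma~\ref{lem:a} to each term yields $Z_w(k) = t^{2-p}\Bigl(\sum_{s}Z_w(s,0,u)\Bigr)Z_w(v,k-1)$. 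The key observation is that $\sum_{s}Z_w(s,0,u)=Z_w(\epsilon,0,u)$: every $0$-admissible string ending in $u$ has length at least $p-1$ (it ends with the $(p-1)$-block $u$), so it contributes to exactly one summand, the one indexed by its own $(p-1)$-prefix. This collapses the sum and produces $Z_w(k)=t^{2-p}\,Z_w(\epsilon,0,u)\,Z_w(v,k-1)$.

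From here the conclusion is immediate. The preceding lemma identifies $Z_w(\epsilon,0,u)=Z_w(v,0)$, so $Z_w(k)=t^{2-p}\,Z_w(v,0)\,Z_w(v,k-1)$. Substituting \eqref{eq:Zvk} with $k-1$ in place of $k$, namely $Z_w(v,k-1)=\bigl(t^{2-p}Z_w(v,0,u)\bigr)^{k-1}Z_w(v,0)$, and collecting the two factors $Z_w(v,0)$ gives exactly \eqref{eq:Zwk}. The final clause of the statement is then a formality: equations \eqref{eq:v0u} and \eqref{eq:v0} express $Z_w(v,0)$ and $t^{2-p}Z_w(v,0,u)$ as rational functions of $Z_w(0)$, and feeding these into \eqref{eq:Zwk} rewrites $Z_w(k)$ entirely through $Z_w(0)$.

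The only point that genuinely needs care is the combinatorial bijection behind the first relation, which is inherited from Lemma~\ref{lem:a}. One must confirm that cutting a $k$-admissible string at the leftmost occurrence of $w$ yields a \emph{unique} pair $(L,R)$ with $L$ being $0$-admissible and ending in $u$, $R$ being $(k-1)$-admissible and beginning in $v$, and that conversely any such pair reglues---sharing the $p-2$ overlapping middle digits, whence the length-correcting factor $t^{2-p}$---to a unique $k$-admissible string whose leftmost occurrence of $w$ sits at the junction. The subtle part, exactly as for $p>1$ in general, is self-overlap: one has to check that no occurrence of $w$ is spuriously created or lost at the seam and that $L$ really contains none, which is precisely why ``leftmost'' is the correct cut point. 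The degenerate case $p=1$ (where $u=v=\epsilon$, the only prefix is $s=\epsilon$, $Z_w(\epsilon,0,u)=Z_w(0)$, and the gluing becomes the insertion $LwR$ with $t^{2-p}=t$ accounting for the single inserted digit) is trivially consistent and may be dispatched in one line.
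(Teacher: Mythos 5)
Your proof is correct and follows essentially the same route as the paper's own: summing Lemma \ref{lem:a} over all $(p-1)$-prefixes $s$ (legitimate because a $k$-admissible string with $k\geq1$ has length at least $p$), collapsing $\sum_{s}Z_w(s,0,u)$ to $Z_w(\epsilon,0,u)$, invoking the identity $Z_w(\epsilon,0,u)=Z_w(v,0)$ from the preceding lemma, and substituting \eqref{eq:Zvk}. Your final paragraph re-auditing the leftmost-occurrence cut is harmless but redundant here, since that bijection is exactly the content of Lemma \ref{lem:a}, which you are entitled to use as proven.
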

\begin{proof}
  For any given block $s$ of length $p-1$ we have from Lemma \ref{lem:a} $
  Z_w(s,k) = t^{2-p} Z_w(s,0,u)\cdot Z_w(v,k-1)$.  For $k\geq1$, a $k$
  admissible string $X$ necessarily has length at least $p$, hence it has a
  prefix $s$ of length $p-1$ available.  So $Z_w(k)$ is the sum of the
  $Z_w(s,k)$ over all $s$ of length $p-1$.  Similarly $\sum_{s, |s|=p-1} Z_w(s,0,u) =
  Z_w(\epsilon, 0, u)$ because any string terminating in $u$ has length at
  least $p-1$ hence has a starting part $s$ of length $p-1$ associated with
  it.  Hence 
  \begin{equation*}
    Z_w(k) = t^{2-p}Z_w(\epsilon,0,u)Z_w(v,k-1)\;,
  \end{equation*}
  and the formula for $Z_w(v,k-1)$ is given by \eqref{eq:Zvk}.  And we
  know that $Z_w(\epsilon,0,u) = Z_w(v,0)$.
\end{proof}
We have all elements needed  to complete our main objective, the proof of
Theorem \ref{thm:totalmass}:
\begin{proof}[Proof of Theorem \ref{thm:totalmass}]
Substituting $t=b^{-1}$ in \eqref{eq:Zwk} we obtain
\begin{equation}
  M_w(k) = b^{p-2}\bigl(b^{p-2}M_w(v,0,u)\bigr)^{k-1}M_w(v,0)^2 =
  b^{p-2}\cdot 1^{k-1} \cdot b^2 = b^p\;.
\end{equation}
Now let $s$ be an arbitrary string and define $k_w^*(s) = \max_{|z|=p-1}k_w(sz)$ as in the
theorem statement.  Suppose $k>k_w^*(s)$ and let $sx$ be an $s$-prefixed string
which is $k$-admissible. Thus $|x|\geq p$.  Let split $x$ into $zy$ with $z$
its prefix of length $p-1$ (and $y$ necessarily not empty).  If $w$ occurs in
$sx = szy$ it is either in $sz$ or in $zy$, so $k_w(sx) = k_w(sz)+k_w(zy)$.  We can
thus partition the set of all $x$ such that $k_w(sx)=k$ according to the
$b^{p-1}$ possible $z$'s.  And we obtain the identity associated with this
partition:
\begin{equation}
  k>k_w^*(s)\implies  Z_w(s,k) = \sum_{|z| = p-1} t^{|s|} Z_w(z, k - k_w(sz))\;.
\end{equation}
According to Lemma \ref{lem:a} and equation \ref{eq:Zvk}, we have
\begin{equation}
  j\geq 1 \implies Z_w(z, j) = t^{2-p}Z_w(z,0,u)\bigl(t^{2-p}Z_w(v,0,u)\bigr)^{j-1}Z_w(v,0)\;.
\end{equation}
Using now the masses given in Lemma \ref{lem:c} (and thanks to $j_z\coloneq k-k_w(sz)>0$) we
obtain
\begin{equation}
  Z_w(s,k)(b^{-1}) = \sum_{|z| = p-1} b^{-|s|}\cdot b^{p-2} \cdot Z_w(z,0,u)(b^{-1})\cdot 1^{j_z-1}\cdot b \;.
\end{equation}
We have explained earlier that $\sum_{|z|=p-1} Z_w(z,0,u) = Z_w(\epsilon,0,u)
= Z_w(v,0)$ and we know that substituting $t=b^{-1}$ gives a mass equal to
$b$.  So $Z_w(s,k)(b^{-1})=b^p b^{-|s|}$.
\end{proof}

We will also need the following additional fact (for $k\geq2$ it is
a corollary of the Theorem \ref{thm:totalmass} we just proved) which for $p=2$ and
$k=1$ had been already observed in \cite[\S3]{burnolone42}:
\begin{proposition}\label{prop:d1}
  For any digit $d\in \sD$, the total mass of the $k$-admissible strings
  starting with $d$ is $b^{p-1}$ if $k\geq1$.  So the total mass of the
  $k$-admissible integers is $(b-1)b^{p-1}$ for $k\geq1$.
\end{proposition}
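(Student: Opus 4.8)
The plan is to read off almost everything from Theorem \ref{thm:totalmass} applied to the one-letter prefix $s=d$, and to patch the single exceptional case by conservation of total mass.

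First I would note that the quantity to compute is $M_w(d,k)=Z_w(d,k)(b^{-1})$, where $Z_w(d,k)$ is the generating series of the $k$-admissible strings having the single digit $d$ as prefix (a block of length $1$). Taking $s=d$, so $\ell=1$, I compute $k_w^*(d)=\max_{|z|=p-1}k_w(dz)$. Since each competing string $dz$ has length exactly $p=|w|$, it can contain $w$ at most once, and it does so precisely when $dz=w$, i.e.\ when $d=d_1$ (then $z=v$). Hence $k_w^*(d)=0$ for $d\neq d_1$ while $k_w^*(d_1)=1$.

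Next, Theorem \ref{thm:totalmass} with $s=d$ gives, for every $k\geq 1+k_w^*(d)$, the value $M_w(d,k)=b^{p}\cdot b^{-1}=b^{p-1}$. Thus the claim $M_w(d,k)=b^{p-1}$ is settled at once for all $d\neq d_1$ with $k\geq1$, and for $d=d_1$ with $k\geq2$. The only case escaping the theorem is $d=d_1$, $k=1$: here the self-overlap $k_w^*(d_1)=1$ forces the hypothesis $k\geq2$, so the direct application fails.

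The main (and only) obstacle is therefore to evaluate $M_w(d_1,1)$, and I would handle it by conservation of mass rather than by a fresh count. For $k\geq1$ every $k$-admissible string has length at least $p\geq1$, hence begins with a well-defined digit; partitioning by this first digit yields the generating-series identity $Z_w(k)=\sum_{d\in\sD}Z_w(d,k)$, whence $\sum_{d\in\sD}M_w(d,k)=M_w(k)$. By Theorem \ref{thm:totalmass} (its ``in particular'' clause) $M_w(k)=b^p$ for every $k\geq1$. Specializing to $k=1$ and inserting the $b-1$ already-known values $M_w(d,1)=b^{p-1}$ for $d\neq d_1$ gives
\begin{equation*}
  M_w(d_1,1)=b^p-(b-1)b^{p-1}=b^{p-1},
\end{equation*}
which completes the first assertion. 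Finally, for the statement about integers I would recall that the $k$-admissible integers with $k\geq1$ are exactly the $k$-admissible strings whose leading digit is nonzero (the empty string, representing $0$, has $k=0$ and is irrelevant); summing the just-proved value over $d\in\{1,\dots,b-1\}$ yields total mass $(b-1)b^{p-1}$, as claimed.
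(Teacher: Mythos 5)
Your proof is correct and follows essentially the same route as the paper: both rest on Theorem \ref{thm:totalmass} (the paper itself remarks that the $k\geq2$ cases are a corollary of it, which is exactly your prefix-clause application with $k_w^*(d)\leq 1$), and both settle the one exceptional case $d=d_1$ by the identical conservation-of-mass subtraction $b^p-(b-1)b^{p-1}=b^{p-1}$. The only cosmetic difference is that for $d\neq d_1$ the paper argues directly that $X=dx$ is $k$-admissible if and only if $x$ is, instead of invoking the theorem's prefix clause.
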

\begin{proof}
  Let $d_1$ be the first digit of $w$.  Let $d\neq d_1$ be another digit.
  Then $X=dx$ is $k$-admissible if and only $x$ is, hence the result in that
  case.  Thus the $k$-admissible strings not starting with $d_1$ make up for a
  total mass of $(b-1)b^{-1}\cdot b^p$.  This forces the $k$-admissible
  strings starting with $d_1$ to have a total mass $b^{p-1}$ too (we have used
  that the empty string is not admissible as $k\geq1$).

  We can deduce in a novel manner $M_w(v,k)=M_w(v,k-1)$ hence Lemma
  \ref{lem:b} from this.  The above paragraph showed (without using any
  anterior result apart from finiteness) that the mass of $k$-admissible
  strings starting with $d_1$ is $b^{-1}$ times $M_w(k)$.  But $X=d_1x$ is
  admissible if either $x$ does not start with $v$ and is $k$-admissible, or
  $x$ starts with $v$ and is $(k-1)$-admissible.  This gives a mass equal to
  $b^{-1}(M_w(k) - M_w(v,k)) + b^{-1} M_w(v,k-1)$.  But the final result has
  to be $b^{-1}M_w(k)$.  So $M_w(v,k)=M_w(v,k-1)$.
\end{proof}

In order to have a fully-rounded picture we now go the extra step of proving
the Guibas-Odlyzko formula \eqref{eq:go}.  We can partition the set $\cA$ of
$0$-admissible strings $X$ according to the value of $k_w(vX)$.  We obtain a
partition of $\cA$ into $\cA_0$, \dots , $\cA_{p-1}$ (some of these possibly
empty, of course).  Suppose $X$ belongs to $\cA_j$, $j\geq1$ and let
$i\in\{1,\dots,p-1\}$ be the index in $wX$, counting from the left and
starting at $0$, of the last occurrence of $w$ in $wX$.  We can think of $i$
as how many times we need to shift $w$ one unit to the right in order to reach
that final occurrence of $w$ in $wX$.  It is also the index in $vX$ of the
last occurrence of $w$, but starting the index count at $1$, not $0$.  Recall
in what is next that we write $w=d_1\dots d_p$.

The string $X\in\cA_j$ thus starts with the last $i$ digits of $w$: $d_{p-i+1}\dots
d_{p}$ is a prefix of $X$.  And it is necessary that $d_{p-i}$ be the first
digit to the left of $X$ in $wX$, i.e. $d_{p-i}=d_p$, and further
$d_{p-i-1}=d_{p-1}$, etc... until $d_1=d_{i+1}$.  So the index $i\geq1$ is
necessarily a period of $w$.  We will now prove that $i$ is exactly the $j$th
positive period of $w$ in increasing order.

Let $s_i$ be the length $i$ suffix of $w$, and $r_i$ the length $i$ prefix of
$w$, for the period $i$.  Then $ws_i=r_iw$.  We found that $X$ starts with $s_i$,
$X=s_iY$, so $wX= ws_i Y = r_i w Y$.  We have $k_w(wX)=k_w(r_iwY) =
k_w(r_iw)+k_w(wY)-1=k_w(r_iw)+k_w(vY)$.  Thus $j+1 = k_w(wX) = k_w(r_iw) +
k_w(vY)$. We have defined $i$
as giving the position in $wX$ of the last occurrence of $w$, and the $s_i$ in the
decomposition $X=s_iY$ is the suffix of this last occurrence of $w$.  So no
occurrence of $w$ in $wX$ can touch $Y$, which means that $k_w(vY)=0$ and thus
$j+1 = k_w(r_i w)=k_w(ws_i)$.  This characterizes $i$ as being the $j$th positive
period of $w$, indeed we observe that for all smaller periods $\iota$, their
$ws_\iota$ is a prefix of $ws_i$.
Let us thus now denote our $i$ also as $i_j$.

We have also obtained in passing that $X = s_i Y$ with $Y\in \cA_0$.
Conversely for such an $X$ we have $k_w(wX) = k_w(ws_i Y) = k_w(r_iwY)=
k_w(r_iw)+k_w(vY) = k_w(r_iw) = k_w(ws_i) = j+1$.  We can conclude that $\cA_j = s_{i_j} \cA_0$
hence its associated generating series is $t^{i_j}$ times the generating
series associated with $\cA_0$.  But the latter is in our notation
$t^{1-p}Z_w(v,0)$.  Hence, using the auto-correlation polynomial $A_w$, we
reach the key final formula:
\begin{equation}
  Z_w(0) = A_w t^{1-p} Z_w(v,0)\;.
\end{equation}
Combining with Lemma \ref{lem:c}, we obtain the
Guibas-Odlyzko formula \eqref{eq:go}:
\begin{align}
    Z_w(0) &= \frac{A_w}{(1 - bt) A_w + t^p}\;,
\\\shortintertext{and}
    Z_w(v,0)  &= \frac{t^{p-1}}{(1 - bt) A_w + t^p}\;,
\\
    t^{2-p}Z_w(v,0,u) &= \frac{(1 -bt)(A_w - 1) + t^p}{(1 - bt) A_w  + t^p}\;.
  \end{align}
Hence Lemma \ref{lem:d} becomes:
\begin{equation}
  k \geq 1\implies Z_w(k) = t^p \frac{\bigl((1 -bt)(A_w - 1) + t^p\bigr)^{k-1}}{\bigl((1 - bt) A_w  + t^p\bigr)^{k+1}}\;.
\end{equation}
The values $M_w(k)=b^p$ are obvious here.  And we obtain $M_w(0)=b^p A_w(b^{-1})=b^p +
b^{p-i_1} + \dots $ in terms of the positive periods $0<i_1<i_2<\dots$, if any.

\section{Measures and integrals}

For each string $X$, we define the $b$-imal number $x(X)\in [0,1)$ simply by
putting $X$ immediately to the right of the $b$-imal separator, i.e.\@ $x(X) =
n(X)/b^{|X|}$.  We define the measure $\mu_k$ as the sum over all
$k$-admissible strings of the Dirac masses at points $x(X)$ with weights
$b^{-|X|}$.  In \cite[\S4]{burnolirwin} it is explained that as the discrete
measure has finite total mass, one can use it to integrate any bounded or any
non-negative function (in the latter case possibly obtaining $+\infty$).  In
particular, consider integrating the function $g(x)$ being defined as $1/x$
for $b^{-1}\leq x < 1$ and zero elsewhere.  The computation of $\mu_k(g)$
involves only those strings $X$ starting with a non-zero $b$-ary digit (in
particular the empty string which maps to the real number $0$ is not involved)
hence contributions are in one-to-one correspondance with those positive
integers $m$ having exactly $k$ occurrences of the block of digits $w$.  And
the contribution of an integer $m\in [b^{l-1},b^l)$ is ${(m/b^{l})^{-1}}$
(value of the function) times $b^{-l}$ (weight associated with the Dirac
point-mass). That gives exactly $m^{-1}$ hence formula \eqref{eq:loglike} for
$S_w(k)$.

This reasoning works for all $k\geq0$ but now we suppose $k\geq1$. Recall in
the following that $p=|w|$.

Let us first suppose $k=1$.  We obtain from \eqref{eq:loglike} the bounds
\begin{equation}
  \mu_1([b^{-1},1)) \leq S_w(1)
  \leq b \cdot \mu_1([b^{-1},1))\;.
\end{equation}
Now, $\mu_1([b^{-1},1))$ is the total mass (i.e. each string $X$ weighs
$b^{-|X|}$ if kept) of the $1$-admissible strings starting with a non zero
digit.  By proposition \ref{prop:d1} this is $(b-1)b^{p-1}$.  We also have
(rather trivial and sub-optimal) bounds
\begin{equation}
  b^p(1 - b^{-1}) \leq\int_{b^{-1}}^1 \frac{b^p \dx}{x} \leq b\cdot b^p(1 - b^{-1})\;.
\end{equation}
Hence $|b^p \log(b) - S_w(1)|\leq (b-1)^2 b^{p-1}$.
It was not really optimal to replace $1/x$ either by $1$ or by $b$! So let us
make a finer decomposition along sub-intervals $I_a = [a/b,(a+1)/b))$, $1\leq
a <b$.  We know from  Proposition \ref{prop:d1} that $\mu_1(I_a)= b^{p-1}$ for
each $a\in \sD$.  Hence we can write
\begin{equation}
  S_w(1) = \sum_{a=1}^{a=b-1} b^{p-1} y_a,\qquad (a+1)^{-1} b\leq y_a \leq a^{-1}b\;.
\end{equation}
And we also have regarding $\int b^p \dx/x$: 
\begin{equation}
  b^p \log(b) =  \sum_{a=1}^{a=b-1} b^{p-1} z_a,\qquad (a+1)^{-1} b\leq z_a \leq a^{-1}b\;.
\end{equation}
Hence
\begin{equation}
  \left| S_w(1) - b^p \log(b) \right| \leq  \sum_{a=1}^{a=b-1}
  b^{p}(\frac1a-\frac1{a+1}) = (b-1)b^{p-1}\;.
\end{equation}
This proves the case $k=1$ for Theorem \ref{thm:limit}.

Let $k=2$.  The exact same decomposition we employed for $k=1$ works too here:
indeed $\mu_2(I_a) = b^{p-1}$ from Theorem \ref{thm:totalmass} applied to the
one-digit strings $s=a$, $1\leq a < b$, for which the condition $k> k_w^*(s)$ is
certainly valid as $k_w^*(s)\leq 1$.  So the case $k=2$ of Theorem \ref{thm:limit} is done.

Let $k>2$.  We partition $[b^{-1},1)$ into intervals $I(s) =
\bigl[n(s)/b^{k-1},(n(s)+1)/b^{k-1}\bigr)$ over all strings $s$ of length
$k-1$ and whose first digit is non-zero ($n(s)$ is the associated integer).
The value of $\mu_k(I(s))$ is the total mass of all $k$-admissible strings
having $s$ as prefix (for example with $k=4$ and whatever $w$, $10$
can not contribute any mass to $[0.100,0.101)$ despite $x(10)$ belonging
to it).  By Theorem \ref{thm:totalmass}, as $k$ is greater than the length of
$s$, this total mass is $b^p\cdot b^{-|s|} = b^{p-k+1}$.
We can thus simultaneously
write $S_w(k) = \sum_{s} b^{p-k+1} y_s$ with $(n(s)+1)^{-1}b^{k-1}\leq y_s\leq
n(s)^{-1}b^{k-1}$ and $b^p \log(b) = \sum_{s} b^{p}\cdot b^{-k+1} z_s$ with
$(n(s)+1)^{-1}b^{k-1}\leq z_s\leq n(s)^{-1}b^{k-1}$, hence
\begin{equation}
  \left| S_w(k) - b^p \log(b) \right| \leq  \sum_{s}
  b^{p-k+1}(\frac{b^{k-1}}{n(s)}-\frac{b^{k-1}}{n(s)+1}) =
  b^{p-k+1}\Bigl(\frac{b^{k-1}}{b^{k-2}}-\frac{b^{k-1}}{b^{k-1}} \Bigr)\;,
\end{equation}
and the proof of Theorem \ref{thm:limit} is complete.

Let us now prove the weak convergence of the $\mu_k$'s to $b^p$ times Lebesgue
measure.  It will suffice to prove that for any $0\leq t < u \leq 1$ such that
there exists an integer $l$ with $b^lt$ and $b^l u$ both integers then
$\mu_k([t,u)) = b^p(u-t)$ for $k$ large enough.  Indeed this implies $\lim
\mu_k(I) = b^p |I|$ for any interval $I\subset[0,1)$, where $|I|$ is Lebesgue
measure, as one sees from bounding $\liminf$ and $\limsup$ via contained (if
$|I|>0$) or containing intervals $[t,u)$.  See \cite[Prop. 26]{burnolirwin} for
the $p=1$ case.  In \cite{burnolirwin} the stabilization $\mu_k([t,u)) =
b(u-t)$ was proven for $k\geq l$, here we prove that $\mu_k([t,u)) = b^p(u-t)$
holds at least for $k\geq l+1$.  It is enough to handle $t = n/b^l$, $u
=(n+1)/b^l$, $0\leq n < b^l$, $n$ integer.  Let $s$ be the length $l$ string
representing the integer $n$ with possibly added leading zeros.  The
$k$-admissible strings $X$ mapping to the interval $[t,u)$ all share $s$ as
common length $l$ prefix (we used that $X$ has surely length $>l$ if it is
to be $k$-admissible, as $k>l$).  Theorem \ref{thm:totalmass} implies (see the
sentence following it) that for $k>l$ the
total mass of such $k$-admissible strings with common prefix $s$ is
$b^{p-l}$.  This means that $\mu_k([t,u))=b^p(u-t)$ for such $t = n/b^l$, $u
=(n+1)/b^l$, $0\leq n < b^l$, $n$ integer.

The proof of Theorem \ref{thm:conv} is thus complete. 

\singlespacing

\providecommand\bibcommenthead{}
\def\blocation#1{\unskip}
\def\arxivurl#1{\href{https://arxiv.org/abs/#1}{\textsf{arXiv:#1}}}




\end{document}